\newtheorem{thm}{Theorem}
\newtheorem{defn}{Definition}
\newtheorem{lemma}{Lemma}
\newtheorem{pro}{Proposition}
\numberwithin{equation}{section} \setcounter{tocdepth}{1}
\begin{document}
\title [Stability analysis of a three-dimensional system of Topp model with diabetes]
{Stability analysis of a three-dimensional system of Topp model with diabetes}

\author{Z.S. Boxonov, U.A. Rozikov}

\address{ U.Rozikov$^{a,b,c}$\begin{itemize}
 \item[$^a$] V.I.Romanovskiy Institute of Mathematics, 9, University str. 100174, Tashkent, Uzbekistan;
\item[$^b$] New Uzbekistan University, 1, Movarounnahr str., 100000, Tashkent, Uzbekistan;
\item[$^c$] Faculty of Mathematics, National University of Uzbekistan, 4, University str. 100174, Tashkent, Uzbekistan.
\end{itemize}}
\email{rozikovu@yandex.ru}

\address{Z.Boxonov,
V.I.Romanovskiy Institute of Mathematics, 9, University str. 100174, Tashkent, Uzbekistan.}
 \email {z.boxonov@mathinst.uz}

\begin{abstract} Mathematical models of glucose, insulin, and pancreatic $\beta$-cell mass dynamics are essential for understanding the physiological basis of type 2 diabetes. This paper investigates the Topp model's discrete-time dynamics to represent these interactions. We perform a comprehensive analysis of the system's trajectory, examining both local and global behavior. First, we establish the invariance of the positive trajectory and analyze the existence of fixed points. Then, we conduct a complete stability analysis, determining the local and global asymptotic stability of these fixed points. Finally, numerical examples validate the effectiveness and applicability of our theoretical findings. Additionally, we provide biological interpretations of our results.
\end{abstract}

\subjclass[2020] {39A12, 39A30, 92C50}

\keywords{glucose, insulin, beta-cell, fixed point, local stability, global behavior.} \maketitle

\section{\bf Introduction}

To maintain normal human body functioning, it is essential to control the level of glucose in the blood within the range of $70-100 mg/dl$\footnote{https://en.wikipedia.org/wiki/Blood$_-$sugar$_-$level}.
Insulin, produced by beta-cells in the pancreas, facilitates the absorption of glucose by cells and plays a critical role in blood glucose regulation. High blood glucose levels trigger the release of insulin, which helps lower the concentration to a healthy level. As blood glucose levels decrease, insulin release gradually stops. This system, involving both insulin and glucagon, helps maintain blood glucose balance and prevent diabetes-related complications \cite{T, GP, YL}.

Mathematical models of glucose, insulin, and pancreatic beta-cell mass dynamics are crucial for understanding the physiological basis of type 2 diabetes development. Traditionally, type 2 diabetes was thought to arise from insulin insufficiency. The $GI\beta$ model developed by Topp and colleagues is a prominent model for its progression. Topp's model is foundational for studying diabetes progression \cite{T}. The Topp model is
\begin{equation}\label{system}
\left\{%
\begin{array}{lll}
    \frac{dG}{dt}=g_0-g_1G-cGI,\\[3mm]
    \frac{dI}{dt}=\frac{s_1 G^2}{s_2+G^2}\beta-kI, \\[3mm]
    \frac{d\beta}{dt}=(-d_0+r_1G-r_2G^2)\beta,
\end{array}%
\right.\end{equation}
where $G$ $(mg/dl)$, $I$ $(\mu U/ml)$, $\beta$ $(mg)$ stand for the plasma glucose concentration, insulin concentration and the mass of functional beta-cells (preserving appropriate insulin production
and secretion) at time $t$ (days), respectively. The parameter $g_0$ stands for the average rate of glucose infusion per day (with meal ingestion as a main source), including the hepatic glucose
production. The term $g_1G$ represents insulin-independent uptake of glucose, mainly by brain cells and nerve cells. In contrast, the term $cGI$ depicts the insulin-dependent uptake of glucose,
mostly by fat cells and muscle cells in the human body. In particular, the coefficient $c$ $(ml/\mu U/day)$ stands for insulin sensitivity. The insulin secretion from beta-cells is assumed to be triggered by increased glucose levels in the form of the Hill function with coefficient 2, and the parameter $s_1$ represents the secretory capacity per beta-cell. The insulin clearance rate is
denoted by $k$ (/day). The functional beta-cell mass is hypothesized to respond to glucose with a pattern similar to a downward parabola: moderate amount of glucose promotes the growth of
beta-cells, while a high glucose level exacerbates beta-cell apoptosis, resulting in the decrease of functional beta-cell mass and  $d_0$ $(d^{-1})$ is the death rate at zero glucose
and $r_1$ $(mg^{-1}\ dl\ d^{-1})$, $r_2$ $(mg^{-2}\ dl^2\ d^{-1})$ are constants \cite{T, YL}.

In this paper (as in \cite{Rpd}-\cite{B}) we study the discrete time dynamical systems associated to the system (\ref{system}).
In the equations (\ref{system}) we do the following replacements: $$G=x, I=y, \beta=z.$$
Define the operator $W:\mathbb{R}^{3}\rightarrow \mathbb{R}^{3}$ by
\begin{equation}\label{systema}
\left\{%
\begin{array}{lll}
    x'=g_0-g_1x-cxy+x,\\[3mm]
    y'=\frac{s_1 x^2}{s_2+x^2}z-ky+y,\\[3mm]
    z'=\left(-d_0+r_1x-r_2x^2\right)z+z,
\end{array}%
\right.\end{equation}
where all parameters in the model are positive. In the system (\ref{systema}), $x', y', z'$ means the next state relative to the initial state $x, y, z$ respectively.

We call the partition into types hereditary if for each possible state $\textbf{u}=(x,y,z)\in\mathbb{R}^{3}$ describing the current generation, the state $\textbf{u}'=(x',y',z')\in\mathbb{R}^{3}$ is uniquely defined describing the next generation. This means that the association $u\rightarrow u'$ defines a map $W:\mathbb{R}^{3}\rightarrow \mathbb{R}^{3}$ called the evolution operator \cite{L}, \cite{Rpd}.

The main problem for a given operator $W$ and arbitrarily initial point $\textbf{u}^{(0)}=(x^{(0)},y^{(0)},z^{(0)})\in\mathbb{R}^{3}$  is to describe the limit points of the trajectory $\{\textbf{u}^{(m)}\}_{m=0}^{\infty}$, where $$\textbf{u}^{(m)}=W(\textbf{u}^{m-1})=W^m(\textbf{u}^{(0)}).$$

\section{\bf Local Stability Analysis of Fixed Points}

Let $\mathbb{R}_{+}^{3}=\{(x,y,z): x,y,z\in\mathbb{R}, x\geq0, y\geq0, z\geq0\}.$

Note that the operator $W$ well defined on $\mathbb{R}^3$. But to define a dynamical system of continuous operator as glucose, insulin, and pancreatic beta-cell mass  we assume $x\geq0$, $y\geq0$ and $z\geq0$. Therefore, we choose parameters of the operator $W$ to guarantee that it maps $\mathbb{R}^3_{+}$ to itself.

%\textcolor{red}{
%(\ref{systema}) sistemaning 3-tenglamasida $z'\geq0$ bo'lishi uchun ushbu $1-d_0+r_1x-r_2x^2\geq0$ munosabat bajarilishi kerak. Bundan $x\in[0; A]$, $d_0\in(0; 1]$ kelib chiqadi, bu yerda $$A=\frac{r_1+\sqrt{r_1^2+4r_2(1-d_0)}}{2r_2}.$$
%(\ref{systema}) sistemaning 1-tenglamasida $x'\geq0$ bo'lishi uchun $1-g_1-cy\geq0$ bo'lishi kerak. Bundan $y\in[0, \frac{1-g_1}{c}]$, $g_1\in(0;1)$ kelib chiqadi. Endi $x\in[0, A]$ bo'lsa $x'\in[0, A]$ bo'lishini tekshiramiz. $g_0-g_1x-cxy+x\leq A$ bo'lsin. Natijada $g_0-g_1x-cxy\leq0 \  \Rightarrow \ g_0\leq A.$
%}

We denote
$$\Omega=\left\{(x,y,z)\in\mathbb{R}_{+}^{3}: g_0\leq x\leq A, 0\leq y\leq B, 0\leq z\leq C\right\},$$
where $$A=\frac{r_1+\sqrt{r_1^2+4r_2(1-d_0)}}{2r_2}, \ B=\frac{1-g_1}{c}, \ C=\frac{(1-g_1)k}{s_1c}.$$

\begin{lemma} If
\begin{equation}\label{parametr}
0<g_1<1,\ 0<k\leq1,\ 0<d_0\leq1,\ g_0/g_1\leq A,\ r_1^2\leq4r_2d_0
\end{equation}
then the operator (\ref{systema}) maps the set $\Omega$ to itself.
\end{lemma}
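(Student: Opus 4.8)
The plan is to fix an arbitrary point $(x,y,z)\in\Omega$ and check, one coordinate at a time, that $x'$, $y'$, $z'$ satisfy $g_0\le x'\le A$, $0\le y'\le B$ and $0\le z'\le C$. It helps first to regroup the map \eqref{systema} as $x'=g_0+(1-g_1)x-cxy$, $y'=(1-k)y+\frac{s_1x^2}{s_2+x^2}z$ and $z'=f(x)z$, where $f(x):=1-d_0+r_1x-r_2x^2$. The three constants are tailored to exactly these estimates: $B=\frac{1-g_1}{c}$ is the value making the coefficient $1-g_1-cy$ of $x$ in $x'$ nonnegative, $C=\frac{(1-g_1)k}{s_1c}$ is calibrated so that $s_1C=kB$ (this controls $y'$), and $A$ is the larger root of the parabola $f$ (this controls both the sign and the size of $z'$). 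From \eqref{parametr} I will repeatedly use $1-g_1>0$, $1-k\ge0$, $1-d_0\ge0$, and note in passing that $g_0<g_0/g_1\le A$, so $\Omega$ is nonempty.

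For the $x$-coordinate: since $0\le y\le B$ one has $cy\le 1-g_1$, hence $1-g_1-cy\ge0$, and because $x\ge g_0>0$ this gives $x'=g_0+x(1-g_1-cy)\ge g_0$. For the upper bound I drop the nonnegative term $cxy$ to get $x'\le g_0+(1-g_1)x\le g_0+(1-g_1)A$, and the hypothesis $g_0/g_1\le A$, rewritten as $g_0\le g_1A$, yields $x'\le g_1A+(1-g_1)A=A$. For the $y$-coordinate: $1-k\ge0$, $y\ge0$ and $x,z\ge0$ make every term of $y'$ nonnegative, so $y'\ge0$; and using $\frac{s_1x^2}{s_2+x^2}\le s_1$ together with $y\le B$ and $z\le C$ gives $y'\le(1-k)B+s_1C=(1-k)B+kB=B$ by the choice of $C$.

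The $z$-coordinate is the heart of the matter, and the place where the remaining two hypotheses enter. I factor $f(x)=-r_2(x-x_-)(x-A)$ with $x_-=\frac{r_1-\sqrt{r_1^2+4r_2(1-d_0)}}{2r_2}$; since $1-d_0\ge0$ we have $\sqrt{r_1^2+4r_2(1-d_0)}\ge r_1$, so $x_-\le0<g_0\le x\le A$, whence $x-x_-\ge0$ and $x-A\le0$, giving $f(x)\ge0$ and therefore $z'=f(x)z\ge0$. For the upper bound I observe that $1-f(x)=d_0-r_1x+r_2x^2$ is an upward parabola whose discriminant is $r_1^2-4r_2d_0\le0$ by the last assumption in \eqref{parametr}; hence $1-f(x)\ge0$, i.e. $f(x)\le1$, for every $x$. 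Combining $0\le f(x)\le1$ with $0\le z\le C$ gives $0\le z'=f(x)z\le z\le C$, which completes the verification that $W(\Omega)\subseteq\Omega$.

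I expect the only genuinely delicate step to be the treatment of $z'$: one must notice that taking $A$ to be the larger root of $f$ is exactly what makes $f\ge0$ on the whole interval $[g_0,A]$, while the condition $r_1^2\le4r_2d_0$ is precisely what forces $f\le1$ there — these are two independent constraints. The estimates for $x'$ and $y'$, by contrast, are routine sign chases once the map is written in the regrouped form above.
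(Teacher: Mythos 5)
Your proof is correct and follows essentially the same route as the paper: coordinatewise estimates, with the coefficient $1-g_1-cy\ge 0$ and $g_0\le g_1A$ handling $x'$, the calibration $s_1C=kB$ handling $y'$, and the bounds $0\le 1-d_0+r_1x-r_2x^2\le 1$ on $[g_0,A]$ (the paper checks the upper bound at the vertex, you via the discriminant of $d_0-r_1x+r_2x^2$ — the same fact) handling $z'$. No issues.
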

\begin{proof}
Let for any $(x,y,z)\in\Omega$, i.e., $g_0\leq x\leq A, 0\leq y\leq B, 0\leq z\leq C.$ Then
$$x'=g_0+x(1-g_1-cy)\geq g_0+x(1-g_1-c\cdot B)=g_0+x(1-g_1-c\cdot\frac{1-g_1}{c})= g_0,$$
$$x'=g_0+x(1-g_1-cy)\leq g_0+x(1-g_1)\leq g_0+A(1-g_1)=A-g_1(A-g_0/g_1)\leq A.$$
Thus $g_0\leq x'\leq A.$

Since $0<k\leq1$, it is clear that $y'\geq0.$ We will now show that $y'\leq B.$
$$y'=\frac{s_1 x^2}{s_2+x^2}z-ky+y\leq s_1\cdot C-ky+B= s_1\cdot\frac{(1-g_1)k}{s_1c}-ky+B=B-k(B-y)\leq B.$$
It follows that $0\leq y'\leq B.$

It can be seen that the number $A$ is a positive solution to the equation $1-d_0+r_1x-r_2x^2=0$. When $x$ ranges from $0$ to $A$, the expression $1-d_0+r_1x-r_2x^2$ is always non-negative. Since $[g_0,A]\subset[0, A]$, then $z'\geq0.$
$$z'=\left(1-d_0+r_1x-r_2x^2\right)z\leq \left(1-d_0+\frac{r_1^2}{2r_2}-\frac{r_1^2}{4r_2}\right)C=\left(1-\frac{4r_2d_0-r_1^2}{4r_2}\right)C\leq C.$$
It follows that $z'$ also changes from 0 to $C$. Thus $(x',y',z')\in\Omega.$
\end{proof}

\subsection{Fixed points.}\
First, we discuss the existence of the fixed points.
A point $u\in\Omega$ is called a fixed point of $W$ if $W(u)=u$.
%The set of fixed points is denoted by Fix$(W_{0})$.

%For fixed point of $W$ the following holds.
\begin{pro}\label{fixed} The fixed points for (\ref{systema}) are as follows:
\begin{itemize}
  \item If $r_1^2<4r_2d_0$ then the operator (\ref{systema}) has a unique fixed point $u^*=\left(\frac{g_0}{g_1},0,0\right).$
  \item If $r_1^2=4r_2d_0$, $\frac{g_1r_1}{2r_2}< g_0< \frac{r_1}{2r_2}$, $s_2r_2(2r_2g_0-g_1r_1)\leq d_0(r_1-2r_2g_0)$ then mapping (\ref{systema}) has two fixed points with $$u^*_{1}=\left(\frac{g_0}{g_1},0,0\right), \ \ u^*_{2}=\left(x^*,y^*,z^*\right),$$
\end{itemize}
\end{pro}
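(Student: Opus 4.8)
The plan is to solve the fixed-point system $W(u)=u$ directly, i.e. the three equations
\begin{equation*}
g_0-g_1x-cxy=0,\qquad \frac{s_1x^2}{s_2+x^2}z-ky=0,\qquad \bigl(-d_0+r_1x-r_2x^2\bigr)z=0,
\end{equation*}
obtained by subtracting $x,y,z$ from both sides of (\ref{systema}). The third equation factors as $z=0$ or $-d_0+r_1x-r_2x^2=0$, so I would split into these two cases. If $z=0$, the second equation forces $y=0$, and then the first gives $x=g_0/g_1$; this yields the "trivial" fixed point $u^*=(g_0/g_1,0,0)$, which lies in $\Omega$ under (\ref{parametr}) since $g_0\le g_0/g_1\le A$. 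This branch exists regardless of the sign of $r_1^2-4r_2d_0$.

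For the second branch we need $-d_0+r_1x-r_2x^2=0$ to have a positive root, i.e. a root $x^*$ with $z^*\neq0$. The discriminant is $r_1^2-4r_2d_0$: if $r_1^2<4r_2d_0$ there is no real root and hence no further fixed point, giving the first bullet. If $r_1^2=4r_2d_0$ there is the unique double root $x^*=r_1/(2r_2)$. Substituting this $x^*$ into the first equation gives $y^*=(g_0-g_1x^*)/(cx^*)$, and one needs $y^*\ge 0$, i.e. $g_0\ge g_1x^*=g_1r_1/(2r_2)$, which is the left inequality $\frac{g_1r_1}{2r_2}<g_0$; the right inequality $g_0<\frac{r_1}{2r_2}$ should come from requiring $x^*=\frac{r_1}{2r_2}>\frac{g_0}{g_1}\cdot\frac{g_1}{1}$… more precisely from $x^*\ge g_0$ together with $x^*\le A$ (note $A=r_1/(2r_2)$ exactly when $r_1^2=4r_2d_0$, so in fact $x^*=A$), so I would check $g_0\le x^*$. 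Then the second equation determines $z^*=\dfrac{ky^*(s_2+{x^*}^2)}{s_1{x^*}^2}$, and the remaining condition $z^*\le C$ is what should translate, after clearing denominators, into the stated inequality $s_2r_2(2r_2g_0-g_1r_1)\le d_0(r_1-2r_2g_0)$.

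The main obstacle I anticipate is purely bookkeeping: showing that the membership constraints $g_0\le x^*\le A$, $0\le y^*\le B$, $0\le z^*\le C$ defining $\Omega$ are \emph{equivalent} to (or implied by) the three clean hypotheses listed in the second bullet, rather than merely sufficient. In particular I expect to use $A=\frac{r_1}{2r_2}$ in the equality case to identify $x^*$ with the upper endpoint $A$, to rewrite $y^*\le B=\frac{1-g_1}{c}$ and $z^*\le C=\frac{(1-g_1)k}{s_1c}$, and to verify these reduce to the given inequality on $s_2,r_2,g_0,g_1,r_1,d_0$ after substituting $d_0=r_1^2/(4r_2)$. I would carry out these substitutions carefully, keeping track of the sign of $r_1-2r_2g_0$ (guaranteed positive by $g_0<\frac{r_1}{2r_2}$) so that multiplying the inequality through by it preserves direction. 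The case $r_1^2>4r_2d_0$ is not claimed here and would presumably be treated separately; within the stated proposition no such case arises.
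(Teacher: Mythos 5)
Your proposal takes essentially the same route as the paper: solve $W(u)=u$ directly, split on the third equation ($z=0$ versus $r_2x^2-r_1x+d_0=0$), note that a negative discriminant $r_1^2-4r_2d_0<0$ kills the second branch, and in the equality case reduce the membership conditions $g_0\le x^*\le A$, $0\le y^*\le B$, $z^*\le C$ to the stated inequalities (and indeed $z^*\le C$ does clear out to $s_2r_2(2r_2g_0-g_1r_1)\le d_0(r_1-2r_2g_0)$ after substituting $d_0=r_1^2/(4r_2)$), so the plan is sound and matches the paper's proof. One slip worth correcting: $A$ is the positive root of $1-d_0+r_1x-r_2x^2=0$ (note the extra $1$ coming from the discrete-time update), so when $r_1^2=4r_2d_0$ one has $A=\frac{r_1+2\sqrt{r_2}}{2r_2}>\frac{r_1}{2r_2}=x^*$ rather than $A=x^*$; this does not damage your argument, since all you need is $x^*\le A$, which then holds strictly.
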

where
\begin{equation}\label{x*y*z*}
x^*=\frac{r_1}{2r_2},\ y^*=\frac{g_0-g_1x^*}{cx^*},\ z^*=\frac{ky^*(s_2+x^*{^2})}{s_1x^*{^2}}.
\end{equation}
\begin{proof} The equation $W(u)=u$ is the following system
\begin{equation}\label{fsystema}
\left\{%
\begin{array}{lll}
    x=g_0+x(1-g_1-cy),\\[3mm]
    y=\frac{s_1 x^2}{s_2+x^2}z+(1-k)y,\\[3mm]
    z=\left(1-d_0+r_1x-r_2x^2\right)z.
\end{array}%
\right.\end{equation}
The third equation of system (\ref{fsystema}) shows that either $z=0$ or $r_2x^2-r_1x+d_0=0.$ It is easy to see that $x=\frac{g_0}{g_1}, y=0, z=0$ and $x^*=\frac{r_1}{2r_2}, y^*=\frac{g_0-g_1x^*}{cx^*}, z^*=\frac{ky^*(s_2+x^*{^2})}{s_1x^*{^2}}$ (under conditions (\ref{parametr})) are solution to (\ref{fsystema}). Now let's find additional conditions for the parameters so that the point $(x^*,y^*,z^*)$ belongs to the set $\Omega$.

If $r_1^2<4r_2d_0$ then $x^*\notin[g_0, A],$ if $r_1^2=4r_2d_0$ and $2r_2g_0<r_1$ then $x^*\in[g_0, A].$ It follows that $y\leq B.$ Indeed,
$$y^*=\frac{g_0-g_1x^*}{cx^*}\leq\frac{x^*-g_1x^*}{cx^*}=\frac{1-g_1}{c}=B.$$
If $g_1r_1<2r_2g_0$ then $y^*\geq0.$, i.e., $y^*\in[0,B].$ Since $x$ and $y$ are non-negative, it follows that $z$ is also non-negative.
We solve the inequality $z^*\leq C$ and form the condition $s_2r_2(2r_2g_0-g_1r_1)\leq d_0(r_1-2r_2g_0)$ for the parameters.

So, if the parameters satisfy the following conditions
\begin{equation}\label{2fixmavjud}
r_1^2=4r_2d_0,\ \frac{g_1r_1}{2r_2}<g_0<\frac{r_1}{2r_2},\ s_2\leq\frac{d_0(r_1-2r_2g_0)}{r_2(2r_2g_0-g_1r_1)},
\end{equation}
then the point $(x^*,y^*,z^*)$ belongs to the set $\Omega$.
\end{proof}

\subsection{Types of the fixed points}\

Now we shall examine the type of the fixed points.
\begin{defn}\label{d1}
(see \cite{D}) A fixed point $u^*$ of the operator $W$ is called
hyperbolic if its Jacobian $J$ at $u^*$ has no eigenvalues on the
unit circle.
\end{defn}

\begin{defn}\label{d2}
(see \cite{D}) A hyperbolic fixed point $u^*$ called:

1) attracting if all the eigenvalues of the Jacobi matrix $J(u^*)$
are less than 1 in absolute value;

2) repelling if all the eigenvalues of the Jacobi matrix $J(u^*)$
are greater than 1 in absolute value;

3) a saddle otherwise.
\end{defn}

Before analyzing the fixed points we give the following useful lemma (\cite{ChX}).
\begin{lemma}\label{F(l)}
Let $F(\lambda)=\lambda^2+B^*\lambda+C^*,$ where $B^*$ and $C^*$ are two real constants. Suppose $\lambda_1$ and $\lambda_2$ are two roots of $F(\lambda)=0$. Then the following statements hold.
\begin{enumerate}
  \item[(i)] If $F(1)>0$ then
  \item[(i.1)] $|\lambda_1|<1$ and $|\lambda_2|<1$ if and only if $F(-1)>0$ and $C^*<1;$
  \item[(i.2)] $\lambda_1=-1$ and $\lambda_2\neq-1$ if and only if $F(-1)=0$ and $B^*\neq2;$
  \item[(i.3)] $|\lambda_1|<1$ and $|\lambda_2|>1$ if and only if $F(-1)<0;$
  \item[(i.4)] $|\lambda_1|>1$ and $|\lambda_2|>1$ if and only if $F(-1)>0$ and $C^*>1;$
  \item[(i.5)] $\lambda_1$ and $\lambda_2$ are a pair of conjugate complex roots and $|\lambda_1|=|\lambda_2|=1$ if only if $-2<B^*<2$ and $C^*=1;$
  \item[(i.6)] $\lambda_1=\lambda_2=-1$ if only if $F(-1)=0$ and $B=2.$
  \item[(ii)] If $F(1)=0,$ namely, 1 is one root of $F(\lambda)=0,$ then the other root $\lambda$ satisfies $|\lambda|=(<, >)1$  if and only if $|C^*|=(<, >)1.$
  \item[(iii)] If $F(1)<0$ then $F(\lambda)=0$ has one root lying in $(1;\infty).$ Moreover,
  \item[(iii.1)] the other root $\lambda$ satisfies $\lambda<(=)-1$ if and only if $F(-1)<(=)0;$
  \item[(iii.2)] the other root $\lambda$ satisfies $-1<\lambda<1$ if and only if $F(-1)>0.$
\end{enumerate}
\end{lemma}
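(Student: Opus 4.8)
The plan is to prove Lemma \ref{F(l)} directly, by locating each of the two roots relative to the unit circle using only elementary facts about quadratics. Writing $F(\lambda)=(\lambda-\lambda_1)(\lambda-\lambda_2)$, the basic tools are the two identities
\[
F(1)=(1-\lambda_1)(1-\lambda_2)=1+B^*+C^*,\qquad F(-1)=(1+\lambda_1)(1+\lambda_2)=1-B^*+C^*,
\]
Vieta's relations $\lambda_1+\lambda_2=-B^*$, $\lambda_1\lambda_2=C^*$, and the discriminant $\Delta=B^{*2}-4C^*$. The sign of $F(1)$ records whether $1$ is a root, lies strictly between the two real roots, or lies outside the closed segment they span; $F(-1)$ records the same information about $-1$; and $C^*=\lambda_1\lambda_2$ controls the product of the moduli of the roots. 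Combining these three scalars pins down the configuration in every case.

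First I would dispose of the complex-conjugate case $\Delta<0$. Here $\lambda_2=\overline{\lambda_1}$, so $|\lambda_1|=|\lambda_2|=\sqrt{C^*}$ with $C^*>0$, and moreover $F(1)=|1-\lambda_1|^2>0$ and $F(-1)=|1+\lambda_1|^2>0$ automatically; hence, under the standing hypothesis $F(1)>0$, the only possibilities are $C^*<1$ (both moduli $<1$, item (i.1)), $C^*>1$ (item (i.4)), or $C^*=1$, in which case $\Delta<0$ forces $-2<B^*<2$ (item (i.5)). Conversely, the conditions in (i.5), and those parts of (i.1), (i.4) in which they force $\Delta<0$, are verified by inspecting the discriminant. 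This also shows that the situations $F(1)=0$ and $F(1)<0$ of parts (ii) and (iii) can only occur with real roots, since complex roots give $F(1)>0$.

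Next I would treat the real-root case $\Delta\ge 0$. If $F(1)>0$, then $1$ is not strictly between $\lambda_1$ and $\lambda_2$; ordering the roots and reading off the sign of $F(-1)=(1+\lambda_1)(1+\lambda_2)$ together with $C^*$ localizes them. If $F(-1)<0$, exactly one root lies below $-1$, and (using $F(1)>0$) the other lies in $(-1,1)$, giving (i.3). If $F(-1)=0$, then $-1$ is a root with partner $1-B^*$, which is $\neq -1$ iff $B^*\neq 2$ (item (i.2)) and equals $-1$ iff $B^*=2$ (item (i.6)). If $F(-1)>0$, then either both roots lie in $(-1,1)$ or both lie outside $[-1,1]$ on the same side, and these subcases are separated by $C^*=\lambda_1\lambda_2$ being $<1$ (item (i.1)) or $>1$ (item (i.4)). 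For part (ii), $F(1)=0$ means $\lambda_1=1$, hence $\lambda_2=\lambda_1\lambda_2=C^*$ and $|\lambda_2|=|C^*|$. For part (iii), $F(1)<0$ forces $1$ strictly between the (necessarily real, distinct) roots, so one root exceeds $1$; since that root makes its factor of $F(-1)$ positive, the sign of $F(-1)$ directly reveals whether the remaining root is $<-1$, $=-1$, or in $(-1,1)$, which are exactly (iii.1)--(iii.2).

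The content of the lemma is the degree-two Jury/Schur--Cohn criterion, so there is no conceptual obstacle; the only real work is the bookkeeping of signs, and the one point that must be handled with care is that items (i.1)--(i.6) are all asserted under the standing assumption $F(1)>0$, which must be invoked repeatedly (for instance to exclude the configuration ``one root in $(-1,1)$, one root $>1$'' when proving (i.3)) and without which several of the stated equivalences would fail.
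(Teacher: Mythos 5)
Your proof plan is correct, but note that the paper itself does not prove this lemma at all: it is quoted verbatim from the cited reference (Wang and Li, \emph{J.\ Appl.\ Anal.\ Comput.}\ 2014) as a known tool (the degree-two Jury/Schur--Cohn root-location criterion), so there is no in-paper argument to compare against. What you supply is a self-contained elementary proof, and the bookkeeping checks out: the identities $F(1)=(1-\lambda_1)(1-\lambda_2)$, $F(-1)=(1+\lambda_1)(1+\lambda_2)$, $\lambda_1\lambda_2=C^*$ together with the discriminant do pin down every case. In the complex case $C^*=|\lambda_1|^2>0$ and $F(\pm1)>0$ automatically, so the modulus comparison reduces to $C^*\lessgtr 1$, and $C^*=1$ with $\Delta<0$ is exactly $-2<B^*<2$, giving (i.5); in the real case with $F(1)>0$ and $F(-1)>0$ both roots lie either in $(-1,1)$ or both outside $[-1,1]$ on the same side, and the product $C^*$ separates these, giving (i.1) and (i.4); $F(-1)=0$ makes the second root $1-B^*$, giving (i.2)/(i.6); $F(-1)<0$ under $F(1)>0$ traps one root below $-1$ and the other in $(-1,1)$, giving (i.3); $F(1)=0$ makes the other root equal to $C^*$, giving (ii); and $F(1)<0$ forces real distinct roots straddling $1$, after which the sign of $F(-1)$ is the sign of $1+\lambda$ for the smaller root, giving (iii.1)--(iii.2). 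You also correctly flag the one place where care is needed, namely that the standing hypothesis $F(1)>0$ must be invoked to exclude the configuration with a root larger than $1$ in (i.1)--(i.3). So your proposal is a valid proof of the quoted criterion; it simply does work the paper delegates to its reference.
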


To find the type of a fixed point of the operator (\ref{systema})
we write the Jacobi matrix:

$$J(u)=J_{W}=\left(%
\begin{array}{ccc}
  1-g_1-cy & -cx & 0 \\
  \frac{2s_1s_2x}{(s_2+x^2)^2}z & 1-k & \frac{s_1x^2}{s_2+x^2} \\
  (r_1-2r_2x)z & 0 & 1-d_0+r_1x-r_2x^2 \\
\end{array}%
\right).$$
The eigenvalues of the Jacobi matrix at the fixed point $u_1^*=\left(\frac{g_0}{g_1},0, 0\right)$ are as follows
$$\lambda_1=1-g_1,\ \lambda_2=1-k,\ \lambda_3=1-d_0+\frac{r_1g_0}{g_1}-\frac{r_2g_0^2}{g^2_1}.$$
By (\ref{parametr}) we have $0<\lambda_1<1, 0\leq\lambda_2<1$. We write $\lambda_3$ as follows.
$$\lambda_3=1-d_0+\frac{r_1g_0}{g_1}-\frac{r_2g_0^2}{g^2_1}=1-r_2\left(\frac{g_0}{g_1}-\frac{r_1}{2r_2}\right)^2+\frac{r^2_1-4r_2d_0}{4r_2}.$$
If $r^2_1\leq4r_2d_0$ then $\lambda_3<1$.

We calculate eigenvalues of Jacobian matrix at the fixed point $u_2^*=\left(x^*,y^*, z^*\right)$.
The characteristic equation is
\begin{equation}\label{F(L)}
(\lambda-1)F(\lambda)=(\lambda^2+B^*\lambda+C^*)(\lambda-1),
\end{equation}
where
\begin{center}
$B^*=k+\frac{g_0}{x^*}-2,$  \\[1mm]
$C^*=\left(1-\frac{g_0}{x^*}\right)(1-k)+\frac{2s_2kcy^*}{s_2+x^*{^2}}.$
\end{center}
From (\ref{F(L)}) it is clear that one of the eigenvalues is equal to one. Let us determine the sign of $F(1)$, $F(-1)$ and $C^*-1$ under conditions (\ref{parametr}), (\ref{2fixmavjud}).
\begin{center}
$F(1)=1+B^*+C^*=1+k+\frac{g_0}{x^*}-2+\left(1-\frac{g_0}{x^*}\right)(1-k)+\frac{2s_2kcy^*}{s_2+x^*{^2}}=\frac{g_0k}{x^*}+\frac{2s_2kcy^*}{s_2+x^*{^2}}>0,$ \\[2mm]
$F(-1)=1-B^*+C^*=1-k-\frac{g_0}{x^*}+2+\left(1-\frac{g_0}{x^*}\right)(1-k)+\frac{2s_2kcy^*}{s_2+x^*{^2}}=\left(2-\frac{g_0}{x^*}\right)(2-k)+\frac{2s_2kcy^*}{s_2+x^*{^2}}>0,$ \\[2mm]
$C^*-1=\left(1-\frac{g_0}{x^*}\right)(1-k)+\frac{2s_2kcy^*}{s_2+x^*{^2}}-1=\frac{2s_2k(g_0-g_1x^*)}{x^*(s_2+x^*{^2})}-\frac{g_0}{x^*}\left(1-k\right)-k=$ \\[2mm]
$=\frac{1}{x^*(s_2+x^*{^2})}\cdot\left(2s_2k(g_0-g_1x^*)-g_0(1-k)(s_2+x^*{^2})-kx^*(s_2+x^*{^2})\right)\leq$ \\[2mm]
$\leq\frac{k}{x^*(s_2+x^*{^2})}\cdot\left(2s_2(g_0-g_1x^*)-x^*(s_2+x^*{^2})\right)=$ \\[2mm]
$=\frac{k}{x^*(s_2+x^*{^2})}\cdot\left(s_2(2(g_0-g_1x^*)-x^*)-x^*{^3}\right)\leq$ \\[2mm]
$\leq\frac{k}{x^*(s_2+x^*{^2})}\cdot\left(\frac{x^*{^2}(x^*-g_0)}{g_0-g_1x^*}\cdot\left(2(g_0-g_1x^*)-x^*\right)-x^*{^2}\right)=$ \\[2mm]
$=\frac{kx^*}{(s_2+x^*{^2})(g_0-g_1x^*)}\cdot\left(-(1+g_1)x^*{^2}+2g_0(1+g_1)x^*-2g_0^2\right)=$ \\[2mm]
$=-\frac{k(1+g_1)x^*}{(s_2+x^*{^2})(g_0-g_1x^*)}\cdot\left((x^*-g_0)^2+\frac{(1-g_1)g_0^2}{1+g_1}\right)<0.$ \\[2mm]
\end{center}
According to item (i.1) of Lemma \ref{F(l)}, $|\lambda_{2,3}|<1.$

Thus the type of fixed points the following theorem holds.
\begin{thm}\label{type} The type of the fixed points for (\ref{systema}) are as follows:
\begin{itemize}
  \item[i)] if $r_1^2<4r_2d_0$, then the unique fixed point $u^*$ of the operator (\ref{systema}) is the attracting.
  \item[ii)] if $r_1^2=4r_2d_0$, $\frac{g_1r_1}{2r_2}< g_0< \frac{r_1}{2r_2}$, $s_2r_2(2r_2g_0-g_1r_1)\leq d_0(r_1-2r_2g_0)$ then the operator (\ref{systema}) has two fixed points $u_1^*, u_2^*$ and the point $u_1^*$ is attracting, the point $u_2^*$ is non-hyperbolic (but, $u_2^*$ is semi-attracting \footnote{means that two eigenvalues are less than 1 in absolute value.}).
  \end{itemize}
\end{thm}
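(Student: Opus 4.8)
The plan is to assemble the statement from the computations already carried out above: the Jacobian $J_W$, its eigenvalues at each fixed point, and the signs of $F(1)$, $F(-1)$, $C^*-1$. I would then read off the types from Definition \ref{d2} and Lemma \ref{F(l)}.

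For the point $u_1^*=\left(\frac{g_0}{g_1},0,0\right)$ (which is the unique fixed point $u^*$ of item i) when $r_1^2<4r_2d_0$): since $y=z=0$ there, the $(2,1)$ and $(3,1)$ entries of $J_W$ vanish, so $J_W(u_1^*)$ is upper triangular and its eigenvalues are the diagonal entries $\lambda_1=1-g_1$, $\lambda_2=1-k$, $\lambda_3=1-d_0+\frac{r_1g_0}{g_1}-\frac{r_2g_0^2}{g_1^2}$ computed above. Under (\ref{parametr}) we have $0<\lambda_1<1$ and $0\le\lambda_2<1$. Using the identity $\lambda_3=1-r_2\left(\frac{g_0}{g_1}-\frac{r_1}{2r_2}\right)^2+\frac{r_1^2-4r_2d_0}{4r_2}$ one gets $\lambda_3<1$ when $r_1^2<4r_2d_0$, and also when $r_1^2=4r_2d_0$, because then the hypothesis $\frac{g_1r_1}{2r_2}<g_0$ forces $\frac{g_0}{g_1}\ne\frac{r_1}{2r_2}$. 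For the lower bound, $\frac{g_0}{g_1}\in[g_0,A]\subset[0,A]$ (since $g_1<1$ and $g_0/g_1\le A$ by (\ref{parametr})), and on $[0,A]$ the downward parabola $1-d_0+r_1x-r_2x^2$ is non-negative (the same observation used in the proof that $W$ maps $\Omega$ to itself), so $\lambda_3\ge0>-1$. Hence $|\lambda_i|<1$ for all $i$, so $u_1^*$ is hyperbolic and attracting, which proves i) and the first assertion of ii).

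For the point $u_2^*=(x^*,y^*,z^*)$ with $x^*=\frac{r_1}{2r_2}$ the key simplifications are $r_1-2r_2x^*=0$ (so the $(3,1)$ entry of $J_W(u_2^*)$ vanishes) and, because $r_1^2=4r_2d_0$, $1-d_0+r_1x^*-r_2(x^*)^2=1$ (so the $(3,3)$ entry equals $1$). Thus the last row of $J_W(u_2^*)$ is $(0,0,1)$, hence $\lambda=1$ is an eigenvalue and the remaining two eigenvalues are the roots of the factor $F(\lambda)=\lambda^2+B^*\lambda+C^*$ of (\ref{F(L)}); indeed the trace and determinant of the upper-left $2\times2$ block reproduce $-B^*$ and $C^*$ once one substitutes $g_1+cy^*=\frac{g_0}{x^*}$ and $s_1(x^*)^2z^*=ky^*(s_2+(x^*)^2)$, both consequences of (\ref{x*y*z*}). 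Since $F(1)>0$, $F(-1)>0$ and $C^*-1<0$ have already been established above under (\ref{parametr}) and (\ref{2fixmavjud}), item (i.1) of Lemma \ref{F(l)} gives $|\lambda_{2,3}|<1$. Therefore $u_2^*$ has one eigenvalue on the unit circle (so it is non-hyperbolic) and two eigenvalues strictly inside it (so it is semi-attracting), which completes ii).

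I do not anticipate a genuine obstacle: the argument is bookkeeping on top of the preceding computations. The two points requiring a little care are (a) the chain of estimates yielding $C^*-1<0$, which uses the bound on $s_2$ from (\ref{2fixmavjud}) (equivalently $s_2\le\frac{(x^*)^2(x^*-g_0)}{g_0-g_1x^*}$) at the appropriate step; and (b) checking that the eigenvalues at $u_1^*$ are bounded below by $-1$ and strictly below $1$, which is exactly what the invariance region $\Omega$ together with the strict inequalities in the hypotheses are there to provide.
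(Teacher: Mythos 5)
Your proposal is correct and follows essentially the same route as the paper: eigenvalues of the triangular Jacobian at $u_1^*$, the factorization $(\lambda-1)(\lambda^2+B^*\lambda+C^*)$ at $u_2^*$, the sign checks $F(1)>0$, $F(-1)>0$, $C^*<1$, and item (i.1) of Lemma \ref{F(l)}. The only difference is that you make explicit two details the paper leaves implicit, namely $\lambda_3\ge 0$ (via non-negativity of $1-d_0+r_1x-r_2x^2$ on $[0,A]$) and the verification that the $2\times 2$ block reproduces $B^*$, $C^*$ via the fixed-point relations.
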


\section{\bf Periodic  points.}\

A point $\textbf{u}$ in $\Omega$ is called periodic point of $W$ if there exists $p\in \mathbb N$ so that $W^{p}(\textbf{u})=\textbf{u}$. The smallest positive integer $p$ satisfying $W^{p}(\textbf{u})=\textbf{u}$ is called the prime period or least period of the point $\textbf{u}.$

\begin{thm}\label{perT} For $p\geq 2$ the operator (\ref{systema}) does not have any $p$-periodic point in the set $\Omega.$
\end{thm}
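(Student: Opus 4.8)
The plan is to exploit the quasi-triangular structure of $W$ on the invariant box $\Omega$: the $z$-component evolves by a pure multiplication, the $y$-component becomes a scalar linear recursion once $z$ is pinned down, and the $x$-component becomes a scalar affine recursion once $y$ is pinned down. So suppose, toward a contradiction, that $\mathbf{u}^{(0)}=(x^{(0)},y^{(0)},z^{(0)})\in\Omega$ is a periodic point of prime period $p\geq2$. By Lemma 1 the whole orbit $\mathbf{u}^{(0)},\dots,\mathbf{u}^{(p)}=\mathbf{u}^{(0)}$ lies in $\Omega$, so $g_0\leq x^{(m)}\leq A$ for every $m$. Writing $h(x)=1-d_0+r_1x-r_2x^2$ and iterating the third equation of (\ref{systema}) gives
\[
z^{(0)}=z^{(p)}=z^{(0)}\prod_{m=0}^{p-1}h\!\left(x^{(m)}\right).
\]
Exactly as used in the proof of Lemma 1, on $[0,A]\supseteq[g_0,A]$ one has $0\leq h(x)\leq1$: nonnegativity because $A$ is the positive root of $h=0$ and the parabola opens downward, and the bound $h(x)\le1$ because $\max_x h(x)=1-d_0+r_1^2/(4r_2)\le1$ under (\ref{parametr}).

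First I would treat the case $z^{(0)}\neq0$. Then $\prod_{m=0}^{p-1}h(x^{(m)})=1$ with each factor in $[0,1]$, which is possible only if $h(x^{(m)})=1$, i.e. $r_2(x^{(m)})^2-r_1x^{(m)}+d_0=0$, for every $m$. Under (\ref{parametr}) this quadratic has a real root only when $r_1^2=4r_2d_0$, in which case the root is forced: $x^{(m)}=r_1/(2r_2)=x^*$ for all $m$. Substituting the constant value $x^{(m)}\equiv x^*$ into the first equation of (\ref{systema}) yields $y^{(m)}\equiv(g_0-g_1x^*)/(cx^*)=y^*$, and then the second equation yields $z^{(m)}\equiv ky^*(s_2+(x^*)^2)/(s_1(x^*)^2)=z^*$. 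Hence the orbit is constant, equal to $u_2^*$, so $W(\mathbf{u}^{(0)})=\mathbf{u}^{(0)}$, contradicting that the prime period is $p\geq2$.

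Next I would treat $z^{(0)}=0$, so that $z^{(m)}=0$ for all $m$. The second equation of (\ref{systema}) then collapses to $y^{(m+1)}=(1-k)y^{(m)}$, giving $y^{(0)}=(1-k)^p y^{(0)}$; since $0<k\leq1$ implies $0\leq1-k<1$ we have $(1-k)^p\neq1$, hence $y^{(0)}=0$ and $y^{(m)}=0$ for all $m$. The first equation then collapses to the affine map $x^{(m+1)}=g_0+(1-g_1)x^{(m)}$, so $x^{(0)}-g_0/g_1=(1-g_1)^p\,(x^{(0)}-g_0/g_1)$; since $0<g_1<1$ gives $(1-g_1)^p\neq1$, we obtain $x^{(0)}=g_0/g_1$, i.e. the orbit equals the fixed point $u_1^*$, again contradicting $p\geq2$. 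In both cases we reach a contradiction, which proves the theorem.

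I do not expect a genuine obstacle here. The only step needing care is the estimate $0\leq h(x)\leq1$ on $[g_0,A]$ together with the elementary fact that a product of numbers from $[0,1]$ equals $1$ only when every factor equals $1$; everything afterwards is the forced back-substitution $x\mapsto y\mapsto z$. The one mild nuisance is the degenerate parameter values $k=1$ and the borderline $r_1^2=4r_2d_0$, which are precisely the cases where a multiplier or a factor vanishes rather than merely being $<1$; but in each such case the relevant power is $0\neq1$, so the argument goes through unchanged.
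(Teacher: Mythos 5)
Your proof is correct and uses essentially the same argument as the paper: iterate the third equation along a period-$p$ orbit, observe that each multiplier $h(x)=1-d_0+r_1x-r_2x^2$ lies in $[0,1]$ on $[g_0,A]$ under (\ref{parametr}), and conclude that the product can equal $1$ only if every $x^{(m)}=\frac{r_1}{2r_2}$, which forces the orbit to be a fixed point. You are in fact slightly more complete than the paper at two points: you handle the case $z^{(0)}=0$ (which the paper's proof passes over with ``Since $z\neq0$'', even though such points exist in $\Omega$; your linear recursions for $y$ and then $x$ force the orbit to be $u_1^*$), and you spell out the back-substitution showing that $x^{(m)}\equiv x^*$ forces $y^{(m)}\equiv y^*$ and $z^{(m)}\equiv z^*$, so the orbit is exactly $u_2^*$.
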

\begin{proof} Let's consider the following system.
\begin{equation}\label{Psystema}
\left\{%
\begin{array}{lll}
    x=x^{(p)}=g_0+x^{(p-1)}\left(1-g_1-cy^{(p-1)}\right),\\[3mm]
    y=y^{(p)}=\frac{s_1 \left(x^{(p-1)}\right){^2}}{s_2+\left(x^{(p-1)}\right){^2}}z^{(p-1)}+(1-k)y^{(p-1)},\\[3mm]
    z=z^{(p)}=\left(1-d_0+r_1x^{(p-1)}-r_2\left(x^{(p-1)}\right){^2}\right)z^{(p-1)}.
\end{array}%
\right.\end{equation}
From the third equation of system (\ref{Psystema}) we have
\begin{center}
$z=z^{(p-1)}\left(1-r_2\left(x^{(p-1)}-\frac{r_1}{2r_2}\right)^2+\frac{r_1^2-4r_2d_0}{4r_2}\right)
=z^{(p-2)}\left(1-r_2\left(x^{(p-1)}-\frac{r_1}{2r_2}\right)^2+\frac{r_1^2-4r_2d_0}{4r_2}\right)\left(1-r_2\left(x^{(p-2)}-\frac{r_1}{2r_2}\right)^2+\frac{r_1^2-4r_2d_0}{4r_2}\right)
=...=z\left(1-r_2\left(x^{(p-1)}-\frac{r_1}{2r_2}\right)^2+\frac{r_1^2-4r_2d_0}{4r_2}\right)\left(1-r_2\left(x^{(p-2)}-\frac{r_1}{2r_2}\right)^2+\frac{r_1^2-4r_2d_0}{4r_2}\right)\cdot...
\cdot\left(1-r_2\left(x-\frac{r_1}{2r_2}\right)^2+\frac{r_1^2-4r_2d_0}{4r_2}\right).$
\end{center}
Since $z\neq0$, we have
\begin{equation}\label{zP}
\begin{array}{ll}
\left(1-r_2\left(x^{(p-1)}-\frac{r_1}{2r_2}\right)^2+\frac{r_1^2-4r_2d_0}{4r_2}\right)\left(1-r_2\left(x^{(p-2)}-\frac{r_1}{2r_2}\right)^2+\frac{r_1^2-4r_2d_0}{4r_2}\right)\cdot...\\
...\cdot\left(1-r_2\left(x-\frac{r_1}{2r_2}\right)^2+\frac{r_1^2-4r_2d_0}{4r_2}\right)=1
\end{array}%
\end{equation}
Under the condition $r_1^2<4r_2d_0$, the left side of equation (\ref{zP}) will always be less than 1.
In this case, the equation will not have roots, which means that operator (\ref{systema}) does not have a $p$-periodic point in the set $\Omega$.

Let $r_1^2=4r_2d_0.$ Then, for equation (\ref{zP}) to have a root, it is necessary and sufficient to satisfy the relations
\begin{center}
$x^{(p-1)}=x^{(p-2)}=...=x=\frac{r_1}{2r_2}.$
\end{center}
These relationships hold only at a fixed point $\frac{r_1}{2r_2}.$ So, in this case, operator (\ref{systema}) does not have $p$-periodic points in the set $\Omega$.
%$$\left(1-r_2\left(x^{(p-1)}-\frac{r_1}{2r_2}\right)^2\right)\left(1-r_2\left(x^{(p-2)}-\frac{r_1}{2r_2}\right)^2\right)\cdot
%...\cdot\left(1-r_2\left(x-\frac{r_1}{2r_2}\right)^2\right)=1$$
\end{proof}

\section{\bf Global Behavior}\
In this section for any initial point $\left(x^{(0)}, y^{(0)}, z^{(0)}\right)\in \Omega$ we investigate behavior of the trajectories $\left(x^{(n)},y^{(n)},z^{(n)}\right)=W^n\left(x^{(0)}, y^{(0)},z^{(0)}\right), n\geq1.$

%The proof is based on the following lemma.
We have
\begin{equation}\label{xnynzn}
\begin{array}{lll}
    x^{(n)}=g_0+x^{(n-1)}\left(1-g_1-cy^{(n-1)}\right),\\[3mm]
    y^{(n)}=\frac{s_1 \left(x^{(n-1)}\right){^2}}{s_2+\left(x^{(n-1)}\right){^2}}z^{(n-1)}+(1-k)y^{(n-1)},\\[3mm]
    z^{(n)}=\left(1-d_0+r_1x^{(n-1)}-r_2\left(x^{(n-1)}\right){^2}\right)z^{(n-1)}.
\end{array}%
\end{equation}
\begin{lemma}\label{lemmaconverges} Assume that (\ref{parametr}) holds. For sequences $x^{(n)}$, $y^{(n)}$ and $z^{(n)}$, the following property holds:
\begin{itemize}
  \item [\textbf{(i)}] if $r_1^2<4r_2d_0$, then the sequence $z^{(n)}$ is monotonically decreasing and converges to zero;
  \item [\textbf{(ii)}] if the sequence $z^{(n)}$ converges to zero, then the sequence $y^{(n)}$ also has a limit and converges to zero;
  \item [\textbf{(iii)}] if the sequence $y^{(n)}$ converges to zero, then the sequence $x^{(n)}$ also has a limit and converges to $\frac{g_0}{g_1}$.
\end{itemize}
\end{lemma}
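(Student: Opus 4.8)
The plan is to treat the three claims in order, exploiting the near-triangular structure of \eqref{xnynzn}: the $z$-update feeds $y$, the $y$-update feeds $x$, and in each case the recursion is affine in the variable being updated with a linear coefficient that is a contraction thanks to \eqref{parametr}. Throughout I use that, by Lemma 1, the trajectory starting in $\Omega$ stays in $\Omega$, so $x^{(n)}\in[g_0,A]$, $y^{(n)}\in[0,B]$, $z^{(n)}\in[0,C]$ for all $n$.

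For \textbf{(i)} I would write $z^{(n)}=\lambda\!\left(x^{(n-1)}\right)z^{(n-1)}$ with $\lambda(x)=1-d_0+r_1x-r_2x^2$ and complete the square, $\lambda(x)=1-r_2\!\left(x-\tfrac{r_1}{2r_2}\right)^2+\tfrac{r_1^2-4r_2d_0}{4r_2}$. On $[0,A]\supseteq[g_0,A]$ the downward parabola $\lambda$ is nonnegative (as $A$ is its positive root and $d_0\le 1$ forces the other root to be $\le 0$), while from the completed square $\lambda(x)\le q:=1+\tfrac{r_1^2-4r_2d_0}{4r_2}$, with $q<1$ exactly because $r_1^2<4r_2d_0$. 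Hence $0\le z^{(n)}\le q\,z^{(n-1)}\le z^{(n-1)}$, so $z^{(n)}$ is monotone decreasing, and iterating gives $0\le z^{(n)}\le q^{\,n}z^{(0)}\to 0$.

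For \textbf{(ii)} and \textbf{(iii)} I would invoke one elementary fact about affine recursions: if $t^{(n)}=\alpha\,t^{(n-1)}+\varepsilon^{(n-1)}$ with a fixed $\alpha\in[0,1)$ and $\varepsilon^{(n-1)}\to 0$, then $t^{(n)}$ converges to $0$; this follows by taking $\limsup$ in $|t^{(n)}|\le \alpha|t^{(n-1)}|+|\varepsilon^{(n-1)}|$, which yields $(1-\alpha)\limsup|t^{(n)}|\le 0$. For \textbf{(ii)}: the $y$-equation reads $y^{(n)}=(1-k)y^{(n-1)}+a^{(n-1)}z^{(n-1)}$ with $0\le a^{(n-1)}=\tfrac{s_1(x^{(n-1)})^2}{s_2+(x^{(n-1)})^2}<s_1$ bounded; since $z^{(n)}\to 0$ the forcing term tends to $0$ and $1-k\in[0,1)$ by \eqref{parametr}, so $y^{(n)}\to 0$. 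For \textbf{(iii)}: put $w^{(n)}=x^{(n)}-\tfrac{g_0}{g_1}$ and substitute $x^{(n)}=g_0+(1-g_1)x^{(n-1)}-cx^{(n-1)}y^{(n-1)}$; the constant terms cancel, giving $w^{(n)}=(1-g_1)w^{(n-1)}-cx^{(n-1)}y^{(n-1)}$ with $1-g_1\in(0,1)$ and forcing term $-cx^{(n-1)}y^{(n-1)}\to 0$ since $x^{(n-1)}\in[g_0,A]$ is bounded and $y^{(n-1)}\to 0$; hence $w^{(n)}\to 0$, i.e. $x^{(n)}\to g_0/g_1$.

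The only genuinely delicate point is the uniform estimate $0\le\lambda(x^{(n-1)})\le q<1$ in \textbf{(i)}: it rests on the invariance of $\Omega$ to confine $x^{(n-1)}$ to $[0,A]$, on the sign of the quadratic $\lambda$ on that interval, and on the strict inequality $r_1^2<4r_2d_0$ turning the upper bound into a true contraction factor. Once that is in place, parts \textbf{(ii)}–\textbf{(iii)} are routine consequences of the affine-recursion lemma, the boundedness of the coefficients on $\Omega$, and the implications $z^{(n)}\to 0\Rightarrow y^{(n)}\to 0\Rightarrow x^{(n)}\to g_0/g_1$.
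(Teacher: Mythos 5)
Your proposal is correct. Part \textbf{(i)} coincides with the paper's argument: both complete the square in $1-d_0+r_1x-r_2x^2$, bound the ratio $z^{(n)}/z^{(n-1)}$ by the constant $q=1+\frac{r_1^2-4r_2d_0}{4r_2}<1$, and conclude monotone decrease together with $0\le z^{(n)}\le q^n z^{(0)}\to 0$. For parts \textbf{(ii)} and \textbf{(iii)} your route is genuinely different: the paper squeezes $y^{(n)}-(1-k)y^{(n-1)}$ between multiples of $z^{(n-1)}$ and then runs a subsequence argument on the upper limit $\alpha$ (dividing by $1-k$ to get $y^{(n_j-1)}\to\alpha/(1-k)$), and for \textbf{(iii)} it repeats this with both upper and lower limits of $x^{(n)}$, extracting four subsequence limits to force $\alpha=\beta=g_0/g_1$. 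You instead isolate one reusable fact — if $t^{(n)}=\alpha t^{(n-1)}+\varepsilon^{(n-1)}$ with fixed $\alpha\in[0,1)$, $t^{(n)}$ bounded and $\varepsilon^{(n-1)}\to 0$, then $t^{(n)}\to 0$, proved by taking $\limsup$ — and apply it to $y^{(n)}$ and to the deviation $w^{(n)}=x^{(n)}-g_0/g_1$ (your cancellation of the constant terms in the $w$-recursion is correct). This buys uniformity (the same lemma handles both steps, with boundedness on $\Omega$ supplying the hypotheses), avoids the subsequence bookkeeping, and incidentally covers the boundary case $k=1$ allowed by \eqref{parametr} without modification, whereas the paper's division by $1-k$ would need a separate (trivial) remark there; the paper's version, on the other hand, stays entirely at the level of explicit inequalities on the trajectory without stating an auxiliary lemma. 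Both arguments are sound and rest on the same underlying mechanism: a contraction coefficient strictly less than $1$ plus a vanishing forcing term.
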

\begin{proof}
First, we prove the assertion \textbf{(i)}. From third equation of system (\ref{xnynzn}) we get
\begin{center}
  $\frac{z^{(n)}}{z^{(n-1)}}=1-d_0+r_1x^{(n-1)}-r_2\left(x^{(n-1)}\right){^2}=1-r_2\left(x^{(n-1)}-\frac{r_1}{2r_2}\right)^2+\frac{r_1^2-4r_2d_0}{4r_2}<1$.
\end{center}
This implies that $z^{(n)}$ is a decreasing sequence.
\begin{center}
  $z^{(n)}=\left(1-d_0+r_1x^{(n-1)}-r_2\left(x^{(n-1)}\right){^2}\right)z^{(n-1)}=\left(1-r_2\left(x^{(n-1)}-\frac{r_1}{2r_2}\right)^2+\frac{r_1^2-4r_2d_0}{4r_2}\right)z^{(n-1)}<
  \left(1+\frac{r_1^2-4r_2d_0}{4r_2}\right)z^{(n-1)}<\left(1+\frac{r_1^2-4r_2d_0}{4r_2}\right)^2z^{(n-2)}<...<\left(1+\frac{r_1^2-4r_2d_0}{4r_2}\right)^nz^{(0)}.$
\end{center}
Thus $0\leq z^{(n)}<\left(1+\frac{r_1^2-4r_2d_0}{4r_2}\right)^nz^{(0)}.$ Consequently $$\lim\limits_{n\to \infty}z^{(n)}=0.$$

Let's prove the assertion \textbf{(ii)}. By analyzing the second equation of (\ref{xnynzn}) and considering the boundedness of the sequence $x^{(n)}$, we have
\begin{equation}
\frac{s_1g_0^2}{s_2+g_0^2}z^{(n-1)}\leq y^{(n)}-(1-k)y^{(n-1)}\leq\frac{s_1A^2}{s_2+A^2}z^{(n-1)}.
\end{equation}
Since the sequence $z^{(n)}$ converges to zero,  it follows that the sequence $y^{(n)}-(1-k)y^{(n-1)}$ also converges to zero.

Since $y^{(n)}$ is bounded, it has a well-defined upper limit, denoted by $\alpha$. Then there must exist a subsequence $y^{(n_j)}$ that converges to $\alpha$. Furthermore, the fact that $y^{(n)}-(1-k)y^{(n-1)}$ converges to zero implies another subsequence $y^{(n_{j}-1)}$ approaches $\frac{\alpha}{1-k}$. Since $k$ is between 0 and 1, $\frac{\alpha}{1-k}$ must be greater than or equal to $\alpha$.
However, since $\alpha$ is an upper limit, it follows that $\frac{\alpha}{1-k}\leq\alpha.$
Therefore, $\frac{\alpha}{1-k}$ can only equal $\alpha$, which implies $\alpha=0$. We conclude that the sequence $y^{(n)}$ has a limit, which is equal to zero.

\textbf{(iii)}. Using the first equation of (\ref{xnynzn}) and the fact that $x^{(n)}$ is bounded, we can conclude that
\begin{equation}\label{xne}
g_0-cAy^{(n-1)}\leq x^{(n)}-(1-g_1)x^{(n-1)}\leq g_0-cg_0y^{(n-1)}.
\end{equation}
Since $y^{(n)}$ converges to zero, it follows from (\ref{xne}) that $$\lim\limits_{n\to \infty}\left(x^{(n)}-(1-g_1)x^{(n-1)}\right)=g_0.$$
Since $x^{(n)}$ is bounded, it possesses upper and lower limits, denoted by $\alpha$ and $\beta$, respectively. To arrive at a contradiction, suppose $x^{(n)}$ has no limit (i.e.,$\alpha\neq\beta$). This would necessitate the existence of two subsequences: $x^{(n_i)}$ converging to $\alpha$ and $x^{(n_j)}$ converging to $\beta$.
$$\lim\limits_{i\to \infty}\left(x^{(n_i)}-(1-g_1)x^{(n_i-1)}\right)=g_0,\ \Rightarrow \ \lim\limits_{i\to \infty}x^{(n_i-1)}=\frac{\alpha-g_0}{1-g_1}\leq\alpha, \ \Rightarrow \ \alpha\leq\frac{g_0}{g_1},$$
$$\lim\limits_{i\to \infty}\left(x^{(n_i+1)}-(1-g_1)x^{(n_i)}\right)=g_0,\ \Rightarrow \ \lim\limits_{i\to \infty}x^{(n_i+1)}=g_0+(1-g_1)\alpha\leq\alpha,\ \Rightarrow \ \alpha\geq\frac{g_0}{g_1},$$
$$\lim\limits_{j\to \infty}\left(x^{(n_j)}-(1-g_1)x^{(n_j-1)}\right)=g_0,\ \Rightarrow \ \lim\limits_{j\to \infty}x^{(n_j-1)}=\frac{\beta-g_0}{1-g_1}\geq\beta,\ \Rightarrow \ \beta\geq\frac{g_0}{g_1},$$
$$\lim\limits_{j\to \infty}\left(x^{(n_j+1)}-(1-g_1)x^{(n_j)}\right)=g_0,\ \Rightarrow \ \lim\limits_{j\to \infty}x^{(n_j+1)}=g_0+(1-g_1)\beta\geq\beta,\ \Rightarrow \ \beta\leq\frac{g_0}{g_1},$$
Therefore, we conclude that $\alpha=\beta=\frac{g_0}{g_1}$ i.e., $\lim\limits_{n\to \infty}x^{(n)}=\frac{g_0}{g_1}$.
\end{proof}

\begin{thm}\label{Omega} Assume that (\ref{parametr}) holds and let $r_1^2<4r_2d_0$. Then the trajectory converges to fixed point $u^*$ for any initial point $\left(x^{(0)}, y^{(0)},z^{(0)}\right)\in\Omega$, i.e.,
$$\lim\limits_{n\to \infty}W^n\left(x^{(0)},y^{(0)},z^{(0)}\right)=\left(\frac{g_0}{g_1},0,0\right).$$

\end{thm}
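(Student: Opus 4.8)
The plan is to combine Lemma~\ref{lemmaconverges} with the invariance statement of Lemma~1 in a straightforward chaining argument. Since $r_1^2<4r_2d_0$, part \textbf{(i)} of Lemma~\ref{lemmaconverges} gives immediately that $z^{(n)}\to 0$ for any initial point in $\Omega$ (the invariance of $\Omega$ guarantees the whole trajectory stays in $\Omega$, so the boundedness hypotheses used implicitly in the lemma are met). Then part \textbf{(ii)} applies with hypothesis satisfied, yielding $y^{(n)}\to 0$. Finally part \textbf{(iii)} applies, yielding $x^{(n)}\to g_0/g_1$. Putting these three limits together gives $W^n(x^{(0)},y^{(0)},z^{(0)})\to (g_0/g_1,0,0)=u^*$, which is exactly the claimed statement.

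In more detail, I would first invoke Lemma~1 to note that under~(\ref{parametr}) the set $\Omega$ is forward-invariant, so that the trajectory $\left(x^{(n)},y^{(n)},z^{(n)}\right)$ stays in $\Omega$ for all $n\ge 0$; in particular $g_0\le x^{(n)}\le A$, $0\le y^{(n)}\le B$, $0\le z^{(n)}\le C$ for all $n$. This boundedness is what the proof of Lemma~\ref{lemmaconverges} relies on (for instance the sandwiching inequalities for $y^{(n)}-(1-k)y^{(n-1)}$ and for $x^{(n)}-(1-g_1)x^{(n-1)}$), so once invariance is recorded, the three parts of Lemma~\ref{lemmaconverges} can be applied verbatim. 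The only thing to check is that the hypotheses cascade correctly: \textbf{(i)} has hypothesis $r_1^2<4r_2d_0$, which is assumed in the theorem; its conclusion $z^{(n)}\to 0$ is precisely the hypothesis of \textbf{(ii)}; and the conclusion $y^{(n)}\to 0$ of \textbf{(ii)} is precisely the hypothesis of \textbf{(iii)}.

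There is essentially no hard step here — the theorem is a corollary of the lemma, and the work has already been done. If anything, the one point that deserves a sentence is why convergence of each coordinate sequence to its respective limit yields convergence of the vector sequence $W^n(x^{(0)},y^{(0)},z^{(0)})$ in $\mathbb{R}^3$ to $u^*$; this is immediate since convergence in $\mathbb{R}^3$ is equivalent to coordinatewise convergence. One might also remark that by Proposition~\ref{fixed} the point $u^*=(g_0/g_1,0,0)$ is the unique fixed point in $\Omega$ in this parameter regime, so the statement says the fixed point is globally asymptotically stable on $\Omega$, consistent with Theorem~\ref{type}~i) which identified $u^*$ as attracting. I would write the proof as: by Lemma~1 the trajectory remains in $\Omega$; apply Lemma~\ref{lemmaconverges}\textbf{(i)} to get $z^{(n)}\to 0$; then \textbf{(ii)} to get $y^{(n)}\to 0$; then \textbf{(iii)} to get $x^{(n)}\to g_0/g_1$; conclude.
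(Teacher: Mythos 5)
Your proposal is correct and is exactly the paper's argument: the paper's proof of this theorem is a one-line reduction to Lemma \ref{lemmaconverges}, applying parts (i), (ii), (iii) in cascade, and your write-up simply spells out the same chaining (together with the invariance of $\Omega$ from Lemma 1) in more detail.
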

\begin{proof} The proof follows from Lemma \ref{lemmaconverges}.
\end{proof}

A set $S$ is called invariant with respect to $W$ if $W(S)\subset S.$

Denote
\begin{center}
%$\Omega_0=\{(x,y,z)\in \Omega: z=0\},$\\[1mm]
$\Omega_1=\left\{(x,y,z)\in \Omega: x\leq\frac{g_0}{g_1}\right\}.$\\[1mm]
$\Omega_2=\{(x,y,z)\in \Omega_1: z<z^*\}.$
\end{center}

\begin{pro}\label{invariant} The sets $\Omega_1$ and $\Omega_2$ are invariant with respect to the operator $W$.
\end{pro}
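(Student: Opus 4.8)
The plan is to verify directly that each of $\Omega_1$ and $\Omega_2$ is closed under $W$, using the same kind of termwise estimates already employed in Lemma~1. Since $\Omega_1,\Omega_2\subset\Omega$ and Lemma~1 gives $W(\Omega)\subset\Omega$, it suffices to check that the additional defining inequalities ($x\le g_0/g_1$ for $\Omega_1$, and in addition $z<z^*$ for $\Omega_2$) are preserved by one application of $W$.

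For $\Omega_1$: take $(x,y,z)\in\Omega_1$, so in particular $g_0\le x\le g_0/g_1$ and $y\ge 0$. From the first coordinate of~(\ref{systema}),
$$x'=g_0+x(1-g_1-cy)\le g_0+x(1-g_1)\le g_0+\frac{g_0}{g_1}(1-g_1)=\frac{g_0}{g_1}.$$
Combined with $W(\Omega)\subset\Omega$ this shows $(x',y',z')\in\Omega_1$. The point is just that the map $x\mapsto g_0+x(1-g_1)$ has slope $1-g_1\in(0,1)$ and fixes $g_0/g_1$, so it maps $[g_0,g_0/g_1]$ into itself, and the $-cxy$ term only pushes $x'$ further down.

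For $\Omega_2$: take $(x,y,z)\in\Omega_2\subset\Omega_1$; we already know $(x',y',z')\in\Omega_1$, so only $z'<z^*$ remains. Here I would use that on $\Omega_1$ we have $g_0\le x\le g_0/g_1$, and under the hypotheses~(\ref{2fixmavjud}) of Proposition~\ref{fixed} one has $r_1^2=4r_2d_0$, hence $1-d_0+r_1x-r_2x^2=1-r_2\bigl(x-\tfrac{r_1}{2r_2}\bigr)^2\le 1$, with equality only at $x=r_1/(2r_2)=x^*$. Since $g_0<r_1/(2r_2)=x^*$ and $x\le g_0/g_1<x^*$ on $\Omega_1$ (using $g_1<1$ together with $g_0<x^*$, one checks $g_0/g_1$ can still be $\le x^*$ — this needs the parameter range), the growth factor is \emph{strictly} less than $1$ there; combined with $z<z^*$ this yields
$$z'=\bigl(1-r_2(x-x^*)^2\bigr)z< z\le z^* .$$
Hence $(x',y',z')\in\Omega_2$.

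The main obstacle is the $\Omega_2$ part: I must be careful about whether the multiplier $1-r_2(x-x^*)^2$ really stays strictly below $1$ for \emph{all} $x\in[g_0,g_0/g_1]$, i.e. that the interval $[g_0,g_0/g_1]$ does not contain $x^*=r_1/(2r_2)$. This is exactly where the condition $g_0/g_1<x^*$ (equivalently $2r_2g_0<g_1r_1\cdot(\text{something})$) from~(\ref{2fixmavjud}) or~(\ref{parametr}) should be invoked; if instead $x^*\in[g_0,g_0/g_1]$ one only gets $z'\le z< z^*$ from $z<z^*$ directly, which still suffices because the factor is $\le1$. So in fact the strict inequality $z<z^*$ together with the factor being $\le 1$ already closes the argument, and the delicate case analysis about whether $x^*$ lies in the interval can be avoided. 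I would therefore present the $\Omega_2$ step simply as: the multiplier is $\le 1$ on $\Omega_1$ by~(\ref{2fixmavjud}), and $z<z^*\Rightarrow z'\le z<z^*$.
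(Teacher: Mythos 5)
Your final argument is correct and essentially identical to the paper's: for $\Omega_1$ one bounds $x'=g_0+x(1-g_1-cy)\le g_0+x(1-g_1)\le g_0/g_1$ (the affine map with slope $1-g_1\in(0,1)$ fixing $g_0/g_1$), and for $\Omega_2$ one observes that the multiplier $1-d_0+r_1x-r_2x^2=1-r_2\left(x-\tfrac{r_1}{2r_2}\right)^2+\tfrac{r_1^2-4r_2d_0}{4r_2}\le 1$ on $\Omega$, so $z'\le z<z^*$, exactly as in the paper. One caveat: your intermediate claim $g_0/g_1<x^*$ is false under (\ref{2fixmavjud}), which gives $g_1x^*<g_0$, i.e. $x^*<g_0/g_1$, but since the version you actually present uses only the multiplier being $\le 1$ together with the strict inequality $z<z^*$, this does not affect the correctness of your proof.
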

\begin{proof}
%\textbf{(1)} Let $(x,y,z)\in\Omega_0$, i.e., $z=0.$ Then
%\begin{center}
%$z'=z\left(1-d_0+r_1x-r_2x^2\right)=0.$
%\end{center}
%So $(x',y',z')\in\Omega_0$, i.e., $W(\Omega_0)\subset\Omega_0.$

\textbf{(1)} Let $(x,y,z)\in\Omega_1$, i.e., $g_0\leq x\leq\frac{g_0}{g_1}.$ Then
\begin{center}
$x'-\frac{g_0}{g_1}=g_0+x\left(1-g_1-cxy\right)-\frac{g_0}{g_1}\leq g_0+\frac{g_0}{g_1}\left(1-g_1-cxy\right)-\frac{g_0}{g_1}=-\frac{g_0cxy}{g_1}\leq0.$
\end{center}

Thus $(x',y',z')\in\Omega_1$, i.e., $W(\Omega_1)\subset\Omega_1.$

\textbf{(2)} Let $(x,y,z)\in\Omega_2$, i.e., $z<z^*.$ Then
\begin{center}
$z'=z\left(1-d_0+r_1x-r_2x^2\right)=z\left(1-r_2(x-\frac{r_1}{2r_2})^2+\frac{r_1^2-4r_2d_0}{4r_2}\right)\leq z<z^*.$
\end{center}

So $(x',y',z')\in\Omega_2$, i.e., $W(\Omega_2)\subset\Omega_2.$

\end{proof}

Let $r_1^2=4r_2d_0$. The following theorem  gives full description of the set of limit points for the trajectory of any initial point $\left(x^{(0)}, y^{(0)},z^{(0)}\right)$ in the set $\Omega$.
\begin{thm}\label{Omega-}
Assume that (\ref{parametr}) and (\ref{2fixmavjud}) holds. For the operator $W$ given by (\ref{systema}) the following hold:
\begin{itemize}
  \item [\textbf{(i)}] If $x^{(n)}>\frac{g_0}{g_1}$ for any natural number $n$, then the trajectory of the initial point $\left(x^{(0)}, y^{(0)},z^{(0)}\right)$ taken from the set $\Omega\setminus\Omega_1$, is equal to the limit point $u^*_1$.
  \item [\textbf{(ii)}] The trajectory of any initial point $\left(x^{(0)}, y^{(0)},z^{(0)}\right)$ in the set $\Omega_2$ converges to fixed point $u^*_1$.
  \item [\textbf{(iii)}] If $z^{(n)}>z^*$ for any natural number $n$, then the trajectory of the initial point $\left(x^{(0)}, y^{(0)},z^{(0)}\right)$ in the set $\Omega_1\setminus\Omega_2$, is equal to the limit point $u^*_2$.
\end{itemize}
\end{thm}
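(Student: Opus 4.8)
The plan is to build everything on the monotonicity of $\{z^{(n)}\}$ forced by the equality $r_1^2=4r_2d_0$. From the third line of (\ref{xnynzn}),
$$\frac{z^{(n)}}{z^{(n-1)}}=1-d_0+r_1x^{(n-1)}-r_2\big(x^{(n-1)}\big)^2=1-r_2\Big(x^{(n-1)}-\tfrac{r_1}{2r_2}\Big)^2\in[0,1],$$
the nonnegativity being exactly the inequality $1-d_0+r_1x-r_2x^2\ge0$ on $[g_0,A]$ used in the proof that $W(\Omega)\subset\Omega$. Hence $z^{(n)}\downarrow z_\infty\ge 0$; moreover $z^{(n_0)}=0$ for some $n_0$ already gives $z_\infty=0$, so in what follows I may assume $z^{(n)}>0$ for all $n$ whenever I need to divide by $z^{(n-1)}$. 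Write $x^*=\frac{r_1}{2r_2}$ and recall $g_0<x^*<g_0/g_1\le A$ from (\ref{parametr})--(\ref{2fixmavjud}).

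For (i), the hypothesis $x^{(n)}>g_0/g_1$ makes $\big(x^{(n)}-x^*\big)^2\ge(g_0/g_1-x^*)^2>0$, so the displayed ratio is $\le q:=1-r_2(g_0/g_1-x^*)^2$, which lies in $[0,1)$ because $g_0/g_1\in[g_0,A]$. Thus $0\le z^{(n)}\le q^{\,n}z^{(0)}\to 0$, and then Lemma \ref{lemmaconverges}(ii),(iii) give $y^{(n)}\to 0$ and $x^{(n)}\to g_0/g_1$, i.e.\ the trajectory tends to $u_1^*$.

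For (ii) and (iii) the key claim is: \emph{if $z_\infty>0$, then $x^{(n)}\to x^*$, $y^{(n)}\to y^*$ and $z_\infty=z^*$.} Indeed $z^{(n)}/z^{(n-1)}\to 1$ forces $\big(x^{(n-1)}-x^*\big)^2\to 0$, so $x^{(n)}\to x^*$; solving the first line of (\ref{xnynzn}) for $y^{(n-1)}$, namely
$$y^{(n-1)}=\frac{g_0+x^{(n-1)}(1-g_1)-x^{(n)}}{c\,x^{(n-1)}},$$
and using the uniform bound $x^{(n-1)}\ge g_0>0$ (invariance of $\Omega$), we get $y^{(n)}\to (g_0-g_1x^*)/(cx^*)=y^*$; passing to the limit in the second line of (\ref{xnynzn}) then yields $ky^*=\frac{s_1(x^*)^2}{s_2+(x^*)^2}z_\infty$, i.e.\ $z_\infty=z^*$ by (\ref{x*y*z*}). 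Now in case (ii) the trajectory stays in the invariant set $\Omega_2$ (Proposition \ref{invariant}), so $z^{(n)}<z^*$ and hence $z_\infty\le z^{(0)}<z^*$; this excludes $z_\infty>0$ (which would give $z_\infty=z^*$), so $z_\infty=0$ and Lemma \ref{lemmaconverges}(ii),(iii) again give convergence to $u_1^*$. In case (iii) the hypothesis $z^{(n)}>z^*$ gives $z_\infty\ge z^*>0$, so the key claim applies and directly gives $x^{(n)}\to x^*$, $y^{(n)}\to y^*$, $z^{(n)}\to z^*$, i.e.\ convergence to $u_2^*$.

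The step I expect to be the main obstacle is the key claim for (ii)--(iii): the monotone limit of $z^{(n)}$ only constrains $x^{(n)}$ (through the ratio), so one has to invert the $x$-recursion to recover the limit of $y^{(n)}$ — which needs the uniform positivity $x^{(n)}\ge g_0$ coming from the invariance of $\Omega$ — before the $y$-recursion can identify $z_\infty=z^*$; the bookkeeping of the degenerate case $z^{(n)}=0$, so that every division by $z^{(n-1)}$ is legitimate, is a minor additional point.
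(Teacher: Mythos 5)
Your proposal is correct and follows essentially the same route as the paper: monotone decrease of $z^{(n)}$, the geometric bound $z^{(n)}\le\bigl(1-r_2(g_0/g_1-x^*)^2\bigr)^n z^{(0)}$ in case (i), and for (ii)--(iii) the limiting fixed-point system identifying $(\overline{x},\overline{y},\overline{z})=(x^*,y^*,z^*)$ when $z_\infty>0$, combined with Lemma \ref{lemmaconverges} and the invariance of $\Omega_2$. Your write-up merely packages (ii)--(iii) as a single ``key claim'' and is somewhat more explicit than the paper about why the limits of $x^{(n)}$ and $y^{(n)}$ exist (via the ratio $z^{(n)}/z^{(n-1)}\to1$ and solving the first equation for $y^{(n-1)}$), which is a presentational, not substantive, difference.
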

\begin{proof} We have
\begin{equation}\label{xnynzn+}
\begin{array}{lll}
    x^{(n)}=g_0+x^{(n-1)}\left(1-g_1-cy^{(n-1)}\right),\\[3mm]
    y^{(n)}=\frac{s_1 \left(x^{(n-1)}\right){^2}}{s_2+\left(x^{(n-1)}\right){^2}}z^{(n-1)}+(1-k)y^{(n-1)},\\[3mm]
    z^{(n)}=-r_2\left(x^{(n-1)}-\frac{r_1}{2r_2}\right)^2z^{(n-1)}+z^{(n-1)}.
\end{array}%
\end{equation}
First, we prove the assertion \textbf{(i)}. Let all values of $x^{(n)}$ are greater than $\frac{g_0}{g_1}$. Then
\begin{center}
$x^{(n)}-x^{(n-1)}=g_0-g_1x^{(n-1)}-cx^{(n-1)}y^{(n-1)}=g_1\left(\frac{g_0}{g_1}-x^{(n-1)}\right)-cx^{(n-1)}y^{(n-1)}\leq0.$\\[1mm]
$z^{(n)}-z^{(n-1)}=-r_2\left(x^{(n-1)}-\frac{r_1}{2r_2}\right)^2z^{(n-1)}\leq0.$
\end{center}
Therefore, both sequences $x^{(n)}$ and $z^{(n)}$ are decreasing. Since both sequences $x^{(n)}$ and $z^{(n)}$ are decreasing and bounded from below, we have:
\begin{equation}\label{xn>}
\lim\limits_{n\to \infty}x^{(n)}\geq\frac{g_0}{g_1},\
\lim\limits_{n\to \infty}z^{(n)}\geq 0.
\end{equation}
We estimate $x^{(n)}$ and $z^{(n)}$ by the following:
\begin{center}
$x^{(n)}=g_0-g_1x^{(n-1)}-cx^{(n-1)}y^{(n-1)}+x^{(n-1)}< g_0+(1-g_1)x^{(n-1)}$ \\[1mm]
$< g_0+(1-g_1)\left(g_0+(1-g_1)x^{(n-2)}\right)$\\[1mm]
$< g_0+g_0(1-g_1)+...+g_0(1-g_1)^{n-1}+(1-g_1)^{n}x^{(0)}$\\[1mm]
$=\frac{g_0}{g_1}\left(1-(1-g_1)^n\right)+(1-g_1)^{n}x^{(0)}.$\\[1mm]
$z^{(n)}=\left(1-r_2\left(x^{(n-1)}-\frac{r_1}{2r_2}\right)^2\right)z^{(n-1)}<\left(1-r_2\left(\frac{g_0}{g_1}-\frac{r_1}{2r_2}\right)^2\right)z^{(n-1)}$\\
$<\left(1-r_2\left(\frac{g_0}{g_1}-\frac{r_1}{2r_2}\right)^2\right)^2z^{(n-2)}<...<\left(1-r_2\left(\frac{g_0}{g_1}-\frac{r_1}{2r_2}\right)^2\right)^nz^{(0)}.$
\end{center}
Thus  $x^{(n)}<\frac{g_0}{g_1}\left(1-(1-g_1)^n\right)+(1-g_1)^{n}x^{(0)},\ z^{(n)}<\left(1-r_2\left(\frac{g_0}{g_1}-\frac{r_1}{2r_2}\right)^2\right)^nz^{(0)}.$ Consequently
\begin{equation}\label{xn<}\lim\limits_{n\to \infty}x^{(n)}\leq\frac{g_0}{g_1}, \ \lim\limits_{n\to \infty}z^{(n)}\leq 0.
\end{equation}
Based on the inequalities (\ref{xn>}) and (\ref{xn<}), we can conclude that the sequence $x^{(n)}$ converges to  $\frac{g_0}{g_1}$, and the sequence $z^{(n)}$ converges to 0, respectively.
From (\ref{xnynzn+}) it follows $\lim\limits_{n\to \infty}y^{(n)}=0.$

Let's prove the assertion \textbf{(ii)}. Let $\left(x^{(0)}, y^{(0)},z^{(0)}\right)\in \Omega_2.$ Since $\Omega_2$ is an invariant set, then $g_0\leq x^{(n)}\leq \frac{g_0}{g_1}$, $0\leq y^{(n)}\leq C$, $0\leq z^{(n)}<z^*$ for any natural number $n.$ Moreover, since $z^{(n)}$ is strictly decreasing, $0\leq\lim\limits_{n\to \infty}z^{(n)}<z^*.$ Suppose $\lim\limits_{n\to \infty}z^{(n)}=\overline{z}\neq0$. From the third equation of system (\ref{xnynzn+}) it is clear that the existence of the limit $z^{(n)}$ implies the existence of the limit $x^{(n)}$. Similarly, from the first equation of system (\ref{xnynzn+}), the existence of the limit $x^{(n)}$ implies the existence of the limit $y^{(n)}$. We denote the limits of the sequences $x^{(n)}$ and $y^{(n)}$ by $\overline{x}$ and $\overline{y}$, respectively. Then from the system (\ref{xnynzn+}), we form the following:
\begin{equation}\label{x-}
\left\{%
\begin{array}{lll}
    \overline{x}=g_0+\overline{x}(1-g_1-c\overline{y}),\\[1mm]
    \overline{y}=\frac{s_1 \overline{x}{^2}}{s_2+\overline{x}{^2}}\overline{z}+(1-k)\overline{y},\\[1mm]
    \overline{z}=\left(1-r_2\left(\overline{x}-\frac{r_1}{2r_2}\right)^2\right)\overline{z}.
\end{array}%
\right.\end{equation}
By (\ref{x-}) we have
\begin{center}
  $\overline{x}=\frac{r_1}{2r_2}=x^*, \ \overline{y}=y^*,\ \overline{z}=z^*$   \ (see (\ref{x*y*z*})).
\end{center}
From this contradiction it follows that the sequence $z^{(n)}$ converges to 0. Due to the convergence of $z^{(n)}$ to zero, Lemma \ref{lemmaconverges} (second part)  guarantees that $y^{(n)}$ also converges to zero. Leveraging this result and Lemma \ref{lemmaconverges} (third part), we can further conclude that $x^{(n)}$ converges to $g_0/g_1$.

\textbf{(iii)}. Let all values of $z^{(n)}$ are greater than $z^*$. Then, since $z^{(n)}$ is decreasing and bounded from below, $\lim\limits_{n\to \infty}z^{(n)}\geq z^*.$ From the third equation of system (\ref{xnynzn+}) it is clear that the existence of the limit $z^{(n)}$ implies the existence of the limit $x^{(n)}$. Similarly, from the first equation of system (\ref{xnynzn+}), the existence of the limit $x^{(n)}$ implies the existence of the limit $y^{(n)}$. By (\ref{xnynzn+}) we obtain
\begin{center}
$\lim\limits_{n\to \infty}x^{(n)}=x^*, \ \lim\limits_{n\to \infty}y^{(n)}=y^*, \ \lim\limits_{n\to \infty}z^{(n)}=z^*.$
\end{center}
\end{proof}

%\section{\bf Conclusion}\
%
%In this paper, we consider the discrete-time dynamics of the Topp model to represent the dynamics of glucose, insulin, and $\beta$-cell mass. The discrete state of the Topp model is defined by the operator (\ref{systema}). First, we establish the invariance of the positive trajectory and analyze the existence of fixed points. Then, we conduct a complete stability analysis, determining the local and global asymptotic stability of these fixed points. Finally, numerical examples validate the effectiveness and applicability of our theoretical findings. Additionally, we provide biological interpretations of our results.

\begin{center}
  \begin{figure} [h!]
    \centering
    \includegraphics[width=0.8\textwidth]{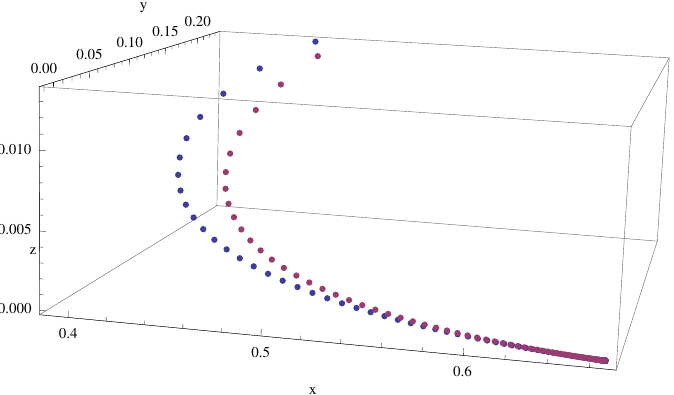}
    \caption{\footnotesize{Considering the system defined by equation (\ref{systema}) with the following parameter values: $g_0=0.1, g_1=0.15, c=0.6, s_1=1, s_2=0.2, k=0.1, r_1=r_2=1$, and $d_0=0.4$, where $r_1^2<4r_2d_0$ holds. We observe that for both initial points, (0.5, 0.3, 0.016) and (0.5, 0.18, 0.016), the trajectory of the system converges to the fixed point (2/3, 0, 0).}}\label{fig.1}
   \end{figure}

\begin{figure} [h!]
    \centering
    \includegraphics[width=0.8\textwidth]{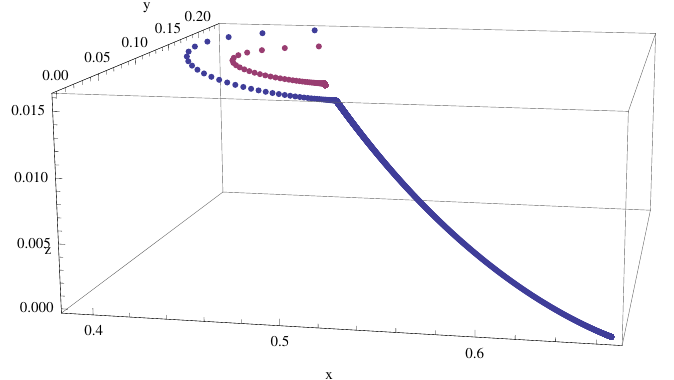}
    \caption{\footnotesize{ Considering the system defined by equation (\ref{systema}) with the following parameter values: $g_0=0.1, g_1=0.15, c=0.6, s_1=1, s_2=0.2, k=0.1, r_1=r_2=1$, and $d_0=0.25$, where $r_1^2=4r_2d_0$ holds.We observe the following behavior: For the initial point (0.5, 0.3, 0.016): The trajectory of the system converges to the fixed point (2/3, 0, 0). For the initial point (0.5, 0.18, 0.016): The trajectory of the system converges to the fixed point (1/2, 1/12, 3/200).}}\label{fig.2}
\end{figure}
\end{center}

\subsection{\bf Biological interpretation}\

(\ref{systema}) model represents the dynamics of glucose, insulin, and $\beta$-cell mass. In this section, we predict the normal behavior of the glucose regulatory system and the pathways leading to diabetes, depending on the parameter values and initial conditions.

Each point (vector) $(G,I,\beta)=(x,y,z)\in \Omega$ can be considered as a state (a measure) of glucose, insulin, and $\beta$-cell mass.
It can be seen from Proposition \ref{fixed} that system (\ref{systema}) has two fixed points $u^*_1$ and $u^*_2$. The fixed point $u^*_1$ corresponds to the disease pathology in the model, while $u^*_2$ represents a physiological state with healthy glucose levels. $d_0$ is the death rate at zero glucose.

Let us give some interpretations of our main results:
\begin{itemize}
  \item Assume that (\ref{parametr}) holds and let $r_1^2<4r_2d_0$. Under this condition on $d_0$ (i.e. the death rate at zero glucose), the value of $\beta$-cell mass and the level of insulin decrease, and the level of glucose increases. In summary, regardless of the initial state $(G,I,\beta)$ of the system (\ref{systema}), it converges to a fixed point, $u^*_1$, which represents a pathological condition (Interpretation of  Theorem \ref{Omega}).
\end{itemize}

\begin{itemize}
  \item (Case Theorem \ref{Omega-}, part \textbf{i}.) Assume that (\ref{parametr}) holds and let $r_1^2=4r_2d_0$. In addition, even if the glucose level is always greater than $g_0/g_1$, the system approaches a pathological state.
  \item (Case Theorem \ref{Omega-}, part \textbf{ii}.)If the value of $\beta$ mass in the initial state is less than $z^*$, then the system approaches the pathological state.
  \item (Case Theorem \ref{Omega-}, part \textbf{iii}.) If the value of $\beta$ mass is always greater than $z^*$, then the system approaches the physiological state.
\end{itemize}

\end{document}